\documentclass{amsart}
\newtheorem{theorem}{Theorem}[section]

\newtheorem{example}[theorem]{Example}
\newtheorem{remark}[theorem]{Remark}
\def\erre{{\rm I\!R}}
\def\R{{\rm I\!R}}

\def\meas{\mathop{\rm meas}}
\def\d{\mathop{\rm dist}}
%\date{}
\title[Multiple solutions of...]{Multiple solutions of $p$-biharmonic equations with Navier boundary
conditions}
\author{Giovanni Molica Bisci}
\address[G. Molica Bisci]{Dipartimento MECMAT, University of Reggio Calabria,
 Via Graziella, Feo di Vito, 89124 Reggio Calabria, Italy.} \email{gmolica@unirc.it}
\author{Du\v{s}an Repov\v{s}}
\address[D. Repov\v{s}]{Faculty of Education, and Faculty of Mathematics and Physics, University of Ljubljana, POB 2964, Ljubljana, Slovenia 1001.
}
\email{dusan.repovs@guest.arnes.si}
\thanks{{\it 2010 Mathematics Subject Classification.} 35J40, 35J60, 35A01, 35B38}
\keywords{Three weak solutions, $p$-biharmonic type
operators, Navier boundary value problem, variational methods}

\thanks{Typeset by \LaTeX}
\begin{document}

\begin{abstract}
In this paper, exploiting variational methods, the existence of multiple weak solutions  for a class of elliptic Navier boundary problems involving the $p$-biharmonic operator is investigated. Moreover, a concrete example of an application is presented.
\end{abstract}
\maketitle
%%%%%%%%%%%%%%%%%%%%%%%%%%%%%%%%%%%%%%%%%%%%%%%%%%%%%%%%%%%%%%%%%%%%%%%%%%%%%%%%%%%%%%%%%%%%%%%%%%%%%%%%
\section{Introduction}
Motivated also by the fact that such kind of problems are used to describe a large class of physical phenomena, many authors looked
for multiple solutions of elliptic equations involving
biharmonic and $p$-biharmonic type operators:
see, for instance, the papers
\cite{GG,LT,WS,WZ,YGY}. In the present work we are interested in the existence of multiple weak solutions for the following nonlinear elliptic
Navier boundary value problem involving the $p$-biharmonic operator
\begin{equation}\tag{$H_\lambda^{f}$} \label{N}
\left\{
\begin{array}{ll}
\Delta(|\Delta u|^{p-2}\Delta u)=\lambda f(x,u)\quad & {\rm in\ }\Omega\\
u=\Delta u=0\quad & {\rm on\ }\partial\Omega,
\end{array}
\right.
\end{equation}
where $\Omega$ is an open bounded subset of $\R^N$ with a smooth enough boundary
$\partial \Omega $, $p>\max\{1,N/2\}$,
$\Delta$ is the usual Laplace operator, $\lambda$ is a positive
parameter and $f$ is a suitable continuous function defined on the set $\bar\Omega\times\R$.\par
For $p=2$, the linear operator $\Delta^2u:=\Delta(\Delta u)$ is the iterated Laplace which multiplied with a positive
constant often occurs in Navier-Stokes equations as a viscosity coefficient. Moreover, its reciprocal
operator denoted $(\Delta^{2}u)^{-1}$
is the celebrated Green operator (see \cite{Gr}).\par
In \cite{WZ}, a Navier boundary value problem is treated where the left-hand side of the equation involves an operator that is more general than the $p$-biharmonic. Meanwhile in \cite{LM}, a concrete example of application of such mathematical model to describe a physical phenomena is also pointed out.\par
 Further, by using the abstract and technical approach developed in \cite{BoMo, BoMo2,CM}, the authors are interested in looking for the existence of infinitely many weak solutions of  perturbed $p$-biharmonic equations.\par

Here, requiring a suitable growth of the primitive of $f$, we are able to establish suitable intervals of values of the parameter $\lambda$ for which the problem (\ref{N})
admits at least three weak solutions.

More precisely, the main result
ensures the existence of two real intervals of parameters $\Lambda_1$
and $\Lambda_2$
such that, for each $\lambda\in \Lambda_1\cup\Lambda_2$, the problem
(\ref{N}) admits at least three weak solutions whose norms are
uniformly bounded with respect to every $\lambda\in \Lambda_2$ (see Theorem \ref{Main}).
\par
 Our method is mostly based on a useful critical point theorem given in \cite[Theorem 3.1]{Bonanno} (see Theorem \ref{abstract} below). 
 We also cite a recent monograph by Krist\'aly, R\u adulescu and Varga \cite{KRV} as a general reference on variational methods adopted here.\par
 
 The obtained results are related to some recent contributions from \cite[Theorem 1]{LT} where, by using a critical point result from \cite{Ric}, the existence of at least three weak solutions has been obtained (see also \cite[Theorem 1]{LT2}).
  We emphasize that, in our cases, on the contrary of the above mentioned works, we give a qualitative analysis of the real intervals $\Lambda_i$ ($i=1,2$) for which problem (\ref{N})
admits multiple weak solutions (see, for details, Remarks \ref{intervalli} and \ref{general}).\par
 As an example, we present a special case of our results (see Theorem \ref{Main2} and Remark \ref{semplice} for more details) on the existence of two nontrivial weak solutions.\par
\begin{theorem} \label{intro}
Let $p>\max\{1,N/2\}$ and $f:\R \rightarrow [0,+\infty[$ be a continuous and nonzero function.
Hence, consider the following autonomous problem
\begin{equation}\tag{$G_\lambda^{f}$} \label{5N}
\left\{
\begin{array}{ll}
\Delta(|\Delta u|^{p-2}\Delta u)=\lambda f(u)\quad & {\rm in\ }\Omega\\
u=\Delta u=0\quad & {\rm on\ }\partial\Omega.
\end{array}
\right.
\end{equation}
 Assume that there exists a real constant $\gamma>0$ such that $f(t)=0$ for every $t\in [-\gamma,\gamma]$, in addition to
$$
\displaystyle\lim_{|t|\rightarrow \infty
}\frac{f(t)}{|t|^{s-1}}=0,
$$
\noindent for some $1\leq s\leq p$.\par
 \noindent Then there exist two real intervals of parameters $\Lambda'_1$ and $\Lambda'_2$ such that$:$ for every $\lambda\in\Lambda'_1$ problem \eqref{5N} admits two distinct nontrivial weak solutions in $W^{2,p}(\Omega)\cap W_0^{1,p}(\Omega)$ and, moreover, for each $\lambda\in\Lambda'_2$ there are two distinct nontrivial weak solutions in $W^{2,p}(\Omega)\cap W_0^{1,p}(\Omega)$ uniformly bounded in norm with respect to the parameter $\lambda$.\par
\end{theorem}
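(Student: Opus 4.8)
The plan is to recast \eqref{5N} variationally and then invoke the abstract three–critical–points result, Theorem \ref{abstract}. First I would work in the reflexive Banach space $X:=W^{2,p}(\Omega)\cap W_0^{1,p}(\Omega)$ equipped with the norm $\|u\|:=\left(\int_\Omega|\Delta u|^p\,dx\right)^{1/p}$, recalling that, because $p>\max\{1,N/2\}$, the embedding $X\hookrightarrow C^0(\bar\Omega)$ is compact; let $c>0$ be a constant with $\|u\|_\infty\le c\,\|u\|$ for all $u\in X$. Writing $F(t):=\int_0^t f(\xi)\,d\xi$, I would put
$$
\Phi(u):=\frac{1}{p}\|u\|^p,\qquad \Psi(u):=\int_\Omega F(u(x))\,dx,
$$
so that the weak solutions of \eqref{5N} coincide with the critical points of $I_\lambda:=\Phi-\lambda\Psi$.

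The next step is to verify the structural hypotheses of Theorem \ref{abstract}. The functional $\Phi$ is convex, coercive, sequentially weakly lower semicontinuous and of class $C^1$, with $\Phi(0)=\Psi(0)=0$ and $\inf_X\Phi=0$; by the strict monotonicity of the $p$-biharmonic operator, $\Phi'$ admits a continuous inverse, while the compact embedding makes $\Psi'$ compact. The growth assumption $\lim_{|t|\to\infty}f(t)/|t|^{s-1}=0$ with $1\le s\le p$ yields, for every $\varepsilon>0$, a constant $C_\varepsilon>0$ with $0\le F(t)\le \varepsilon|t|^p+C_\varepsilon(|t|+1)$ for all $t$; inserting this into $\Psi$ and using the continuous embedding into $L^p(\Omega)$ shows that, for each fixed $\lambda>0$, choosing $\varepsilon$ small enough makes $I_\lambda$ coercive, so the required Palais--Smale-type compactness holds throughout the relevant range of $\lambda$ (the borderline case $s=p$ being covered precisely by letting $\varepsilon\to0$).

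The decisive feature is the hypothesis $f\equiv 0$ on $[-\gamma,\gamma]$, which gives $F(t)=0$ for $|t|\le\gamma$ and hence $\Psi(u)=0$ whenever $\|u\|_\infty\le\gamma$. Choosing $r>0$ with $c\,(pr)^{1/p}\le\gamma$ forces $\{u:\Phi(u)\le r\}\subseteq\{\|u\|_\infty\le\gamma\}$, so that $\sup_{\Phi(u)\le r}\Psi(u)=0$. Consequently the minimax gap inequality demanded by Theorem \ref{abstract} collapses to $0<\Psi(\bar u)/\Phi(\bar u)$, which I would secure by exhibiting a single test function $\bar u\in X$ with $\Phi(\bar u)>r$ and $\Psi(\bar u)>0$ — for example a suitably scaled bump supported in a ball $B\subset\Omega$ and exceeding $\gamma$ on a set of positive measure, whence $\int_\Omega F(\bar u)>0$ because $f\ge0$ is nonzero. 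The same vanishing property yields nontriviality automatically: if $u$ is a critical point with $\|u\|_\infty\le\gamma$, then $f(u)\equiv0$, so $\Psi'(u)=0$ and thus $\Phi'(u)=0$, forcing $u=0$; therefore every nonzero critical point satisfies $\|u\|_\infty>\gamma$.

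With these ingredients in place, Theorem \ref{abstract} produces two intervals $\Lambda_1'$ and $\Lambda_2'$, expressed through $r$, $\Phi(\bar u)$ and $\Psi(\bar u)$, such that $I_\lambda$ admits at least three critical points for every $\lambda\in\Lambda_1'\cup\Lambda_2'$, together with an explicit $\lambda$-independent norm bound when $\lambda\in\Lambda_2'$ (those solutions being confined to a fixed sublevel set of $\Phi$). Concretely, $u=0$ is a local minimum, coercivity supplies a nontrivial global minimum, and a mountain-pass geometry furnishes a third critical point; since the low sublevel $\{\Phi\le r\}$ meets the critical set only at $0$, the latter two are genuinely nontrivial, which is the assertion. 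I expect the main obstacle to be twofold: checking cleanly the abstract regularity package — that $\Phi'$ is invertible with continuous inverse and that $\Psi'$ is compact — and arranging $\bar u$ together with the choice of $r$ so that the resulting intervals $\Lambda_1',\Lambda_2'$ are genuinely nonempty.
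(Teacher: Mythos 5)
Your proposal is correct and follows essentially the same route as the paper: the paper formally deduces Theorem \ref{intro} from Theorem \ref{Main2} via Remark \ref{semplice}, but the underlying argument is exactly yours --- the functionals $\Phi(u)=\|u\|^p/p$ and $\Psi(u)=\int_\Omega F(u)$ on $X=W^{2,p}(\Omega)\cap W^{1,p}_0(\Omega)$, a radius $r=\gamma^p/(pk^p)$ chosen so that $\sup_{\Phi\le r}\Psi=0$ because $F$ vanishes on $[-\gamma,\gamma]$, an explicit bump $u_\delta$ exceeding $\gamma$ with $\Psi(u_\delta)>0$, coercivity of $J_\lambda$ from the growth hypothesis (handling $s=p$ by taking $\varepsilon<1/(\lambda k^p\meas(\Omega))$), and an application of Bonanno's Theorem \ref{abstract}, with nontriviality of two of the three solutions coming from the fact that $u\equiv 0$ is itself a solution. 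The only caveat, shared equally by the paper's own argument, is the implicit use of the fact that $f$ is positive somewhere on $]\gamma,+\infty[$ so that $F(\delta)>0$ for some admissible $\delta$; if $f$ were supported only on $]-\infty,-\gamma[$ one would have $F\le 0$ everywhere and $\Psi(\bar u)>0$ would be unattainable.
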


For completeness, we refer the reader interested in fourth-order two-point boundary
value problems to papers \cite{GST,HZ,LL, MP} and references
therein.\par
 The plan of the paper is as follows. Section 2 is devoted to our abstract framework, while Section 3 is dedicated to the main results and their consequences in the autonomous case. A concrete example of an application is then presented (see Example \ref{esempio}).\par
\section{Preliminaries}
Here, and in the sequel, $\Omega$ is an open bounded subset of $\R^N$, $p>\max\{1,N/2\}$, while $X$ denotes a separable and reflexive real Banach space $W^{2,p}(\Omega)\cap W_0^{1,p}(\Omega)$ endowed with the norm
\begin{equation}\label{norm}
\|u\|=\left(\int_\Omega |\Delta u(x)|^p dx\right)^{1/p},\quad \forall\; u\in X.
\end{equation}
\indent The Rellich-Kondrachov theorem assures that $X$ is compactly
imbedded in $C^0(\bar\Omega)$, whenever
\begin{equation}\label{immersion}
k:=\sup_{u\in X\setminus\{0\}}\frac{\|u\|_{C^0(\bar\Omega)}}{\|u\|}<+\infty,
\end{equation}
\noindent where $\|u\|_{C^0(\bar\Omega)}:=\displaystyle\sup_{x\in \bar\Omega}|u(x)|$, for every $u\in X$.\par
\indent Moreover, if $N\geq 3$, $\partial \Omega$ is of class $C^{1,1}$ and $p\in ]N/2,+\infty[$, due to Theorem 2 and \cite[Remark 1]{Talenti}, one has the following upper bound
$$
k\leq \displaystyle\meas(\Omega)^{\frac{2}{N}+\frac{1}{p'}-1}\frac{\Gamma(1+N/2)^{2/N}}{N(N-2)\pi}\Big[\frac{\Gamma(1+p')\Gamma(N/(N-2)-p')}{\Gamma(N/(N-2))}\Big]^{1/p'},
$$
where $\Gamma$ is the Gamma function, $p'$ the conjugate exponent of $p$ and $``\meas(\Omega)"$ denotes the Lebesgue measure of $\Omega$.\par
 For our aim, the main tool is a
critical points theorem contained in
\cite[Theorem 3.1]{Bonanno} which we recall here for the reader's convenience.
\begin{theorem}\label{abstract}
 Let $X$ be a separable and reflexive real Banach space$;$ $\Phi:X\to\R$ a nonnegative, continuously G$\hat{a}$teaux differentiable and sequentially weakly lower semicontinuous functional whose G$\hat{a}$teaux derivative admits a continuous inverse on $X^*$ and $\Psi :X\to \R$ a continuously G$\hat{a}$teaux differentiable functional whose $G\hat{a}$teaux derivative is compact. Assume that there exists $u_0\in X$ such that $$ \Phi (u_0)=\Psi (u_0)=0,$$ and that
  \begin{itemize}
\item[$(\textrm{i})$] $\displaystyle\lim_{\|u\|\rightarrow \infty}(\Phi(u)-\lambda\Psi(u))=+\infty,$
 \end{itemize}
  for all $\lambda\in [0,+\infty[$. Further, assume that there are $r>0$ and $\bar{u}\in X$ such that$:$
\begin{itemize}
\item[$(\textrm{ii})$] $r<\Phi(\bar{u})$$;$
\item[$(\textrm{iii})$] $ \displaystyle\sup_{u\in \overline{\Phi^{-1}(]-\infty,r[)}^{w}}\Psi(u)< \frac{r}{r+\Phi(\bar u)}\Psi(\bar{u})$$.$
\end{itemize}
Then, for each
$$\lambda \in \Lambda_{1}:=\left]\frac{\Phi(\bar{u})}{\Psi(\bar{x})-\displaystyle\sup_{u\in \overline{\Phi^{-1}(]-\infty,r[)}^{w}}\Psi(u)},\frac{r}{\displaystyle\sup_{u\in \overline{\Phi^{-1}(]-\infty,r[)}^{w}}\Psi(u)}\right[,$$ the equation
\begin{equation}\label{equationx}
\Phi'(u)-\lambda\Psi'(u)=0,
\end{equation}
has at least three distinct solutions in $X$ and, moreover, for each $h>1$, there exists an open interval
$$
\Lambda_2\subset\left[0,\frac{hr}{\displaystyle r\frac{\Psi(\bar{u})}{\Phi(\bar{u})}-\displaystyle\sup_{u\in \overline{\Phi^{-1}(]-\infty,r[)}^{w}}\Psi(u)}\right],
$$
and a positive real number $\sigma>0$ such that, for each $\lambda\in \Lambda_2$, the equation \eqref{equationx}
has at
least three solutions in $X$ whose norms are less than $\sigma$.

\end{theorem}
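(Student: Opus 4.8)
The plan is to study the energy functional $I_\lambda:=\Phi-\lambda\Psi$ and to produce, for $\lambda$ in the stated range, two distinct local minima together with a third, mountain--pass type critical point; since every critical point of $I_\lambda$ solves \eqref{equationx}, this yields the three solutions. First I would record the structural facts on which everything rests. Taking $\lambda=0$ in (i) shows that $\Phi$ itself is coercive, so each sublevel set $\{u\in X:\Phi(u)\le r\}$ is bounded; being weakly closed (as $\Phi$ is sequentially weakly lower semicontinuous) and lying in a reflexive space, it is weakly compact. Since $\Psi'$ is compact, $\Psi$ is sequentially weakly continuous, so $I_\lambda$ is sequentially weakly lower semicontinuous and, by (i), coercive for every $\lambda\ge 0$. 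Finally, coercivity of $I_\lambda$ together with the continuous invertibility of $\Phi'$ and the compactness of $\Psi'$ gives the Palais--Smale condition: a bounded Palais--Smale sequence converges weakly, $\Psi'$ converges strongly along it, hence $\Phi'$ does too, and $(\Phi')^{-1}$ continuous forces strong convergence. This is the hypothesis that legitimizes the later minimax step.

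Next I would locate the first local minimum. Write $a:=\sup_{u\in\overline{\Phi^{-1}(]-\infty,r[)}^{w}}\Psi(u)$, so that (iii) reads $a<\frac{r}{r+\Phi(\bar{u})}\Psi(\bar{u})$; a direct rearrangement shows this inequality is exactly equivalent to the non-emptiness of $\Lambda_1$, and it forces $\Psi(\bar{u})>0$ as well as $a\ge\Psi(u_0)=0$. On the weakly compact set $\overline{\Phi^{-1}(]-\infty,r[)}^{w}$ the functional $I_\lambda$ attains a minimum at some $u_1$, with $I_\lambda(u_1)\le I_\lambda(u_0)=0$ because $u_0$ belongs to that set. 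From $\Phi\ge 0$ and $\Psi(u_1)\le a$ one gets $\Phi(u_1)\le\lambda\Psi(u_1)\le\lambda a$, and the upper endpoint $\lambda<r/a$ of $\Lambda_1$ gives $\Phi(u_1)<r$. Thus $u_1$ lies in the open set $\Phi^{-1}(]-\infty,r[)$ and is a genuine local minimum of $I_\lambda$ on all of $X$.

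I would then produce a second, distinct minimum. By coercivity and weak lower semicontinuity $I_\lambda$ attains a global minimum at some $u_2$. On $\overline{\Phi^{-1}(]-\infty,r[)}^{w}$ one has $I_\lambda\ge-\lambda a$, while the lower endpoint $\lambda>\Phi(\bar{u})/(\Psi(\bar{u})-a)$ of $\Lambda_1$ rearranges to $I_\lambda(\bar{u})=\Phi(\bar{u})-\lambda\Psi(\bar{u})<-\lambda a$. Hence $I_\lambda(u_2)\le I_\lambda(\bar{u})<-\lambda a\le\inf_{\overline{\Phi^{-1}(]-\infty,r[)}^{w}}I_\lambda=I_\lambda(u_1)$, so $u_2\ne u_1$. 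Now $I_\lambda$ is a $C^1$ functional, bounded below, satisfying Palais--Smale and possessing two distinct local minima $u_1,u_2$; the mountain--pass theorem in the Pucci--Serrin form then supplies a third critical point $u_3\notin\{u_1,u_2\}$. This gives three distinct solutions of \eqref{equationx} for every $\lambda\in\Lambda_1$.

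For the final assertion I would pass to a uniform estimate over a compact parameter range. Fixing $h>1$ and taking $\Lambda_2$ inside the indicated bound, the point is that as $\lambda$ ranges over $\overline{\Lambda_2}$ the two minima sit in sublevel sets controlled uniformly in $\lambda$, and the mountain--pass values $c(\lambda)=\inf_\gamma\max_t I_\lambda(\gamma(t))$ stay uniformly bounded; coercivity, uniform in $\lambda$ on this compact interval, then confines every critical point to a fixed ball, yielding the common radius $\sigma$. I expect this last step to be the main obstacle: the existence of three critical points is a clean consequence of the two-minima-plus-mountain-pass scheme, but the uniform norm bound requires quantifying the $\lambda$-dependence of both the sublevel sets and the minimax levels and checking that they remain bounded as $\lambda$ varies over the compact closure of $\Lambda_2$ -- which is precisely where the auxiliary parameter $h$ and the explicit form of the bound defining $\Lambda_2$ enter.
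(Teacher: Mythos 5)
First, a point of reference: the paper does not prove this statement at all --- it is quoted verbatim from \cite[Theorem 3.1]{Bonanno} as the abstract tool, and Bonanno's own proof runs through Ricceri's three critical points theorem \cite{Ric}, which rests on a minimax inequality rather than on a direct minimization/mountain-pass construction. Your argument for the \emph{first} conclusion (three solutions for every $\lambda\in\Lambda_1$) is essentially correct and follows a genuinely different, more elementary route. The verification of the Palais--Smale condition from coercivity, compactness of $\Psi'$ and continuous invertibility of $\Phi'$ is sound; the observation that (iii) is exactly the nonemptiness of $\Lambda_1$ is right; the minimizer $u_1$ over the weakly compact set $\overline{\Phi^{-1}(]-\infty,r[)}^{w}$ does land in the open set $\Phi^{-1}(]-\infty,r[)$ because $I_\lambda(u_1)\le 0$ forces $\Psi(u_1)\ge 0$ and then $\Phi(u_1)\le\lambda\Psi(u_1)\le\lambda a<r$; and the lower endpoint of $\Lambda_1$ does separate the global minimizer $u_2$ from $u_1$, so Pucci--Serrin yields the third critical point. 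Two small things to tidy: treat the degenerate case $a=0$ explicitly (there $r/a$ is read as $+\infty$ and $\Phi(u_1)=0<r$ directly), and note that you are using continuity of $\Phi$ to know $\Phi^{-1}(]-\infty,r[)$ is open, so that $u_1$ is indeed a local minimum on all of $X$.

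The genuine gap is the second conclusion. The existence of an \emph{open} interval $\Lambda_2$ contained in the explicit set $\bigl[0,\,hr/\bigl(r\Psi(\bar u)/\Phi(\bar u)-a\bigr)\bigr]$ together with a single $\sigma>0$ bounding the norms of three solutions for \emph{every} $\lambda\in\Lambda_2$ is the substantive content of Ricceri's theorem, and your sketch does not construct $\Lambda_2$: you take it as already given ``inside the indicated bound'' and then argue heuristically that critical values stay bounded. Pointwise boundedness of the two minima and of the mountain-pass level for each fixed $\lambda$ does not yield a bound uniform in $\lambda$, and nothing in your argument explains where the auxiliary parameter $h>1$ or the specific upper endpoint comes from. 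In the Ricceri--Bonanno proof these emerge from an analysis of the concave function $\lambda\mapsto\inf_{u\in X}\bigl(\Phi(u)+\lambda(\rho-\Psi(u))\bigr)$, whose strict-inequality set produces the interval, combined with an equi-coercivity argument over its compact closure that produces $\sigma$. As written, this half of the theorem is asserted rather than proved, so the proposal establishes only the $\Lambda_1$ part of the statement.
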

Note that, in the above result, the symbol $\overline{\Phi^{-1}(]-\infty,r[)}^{w}$ denotes the weak closure of the sublevel $\Phi^{-1}(]-\infty,r[)$. For completeness, given an operator $S:X\rightarrow X^*$, we say that $S$ admits a continuous inverse on $X^*$ if there exists a continuous operator $T:X^*\rightarrow X$ such that $T(S(x))=x$ for all $x\in X$.

\begin{remark}\label{intervalli}\rm{
As observed in \cite[Remark 2.1]{Bonanno}, the real intervals $\Lambda_1$
and $\Lambda_2$
in Theorem \ref{abstract} are such that either
$$\Lambda_1\cap \Lambda_2=\emptyset,$$
or
$$\Lambda_1\cap \Lambda_2\neq\emptyset.$$
In the first case, we actually obtain two distinct open intervals of positive real
parameters for which equation \eqref{equationx}
admits two nontrivial solutions; otherwise,
we achieve only one interval of positive real parameters, precisely $\Lambda_1\cup\Lambda_2$, for which equation
\eqref{equationx} admits three solutions and, in addition, the subinterval $\Lambda_2$
for
which the solutions are uniformly bounded.}
\end{remark}
\section{Main results}

\indent Let
 $$
 \tau:=\sup_{x\in\Omega}\d(x,\partial\Omega).
 $$
 \noindent Simple calculations show that there is $x_0 \in \Omega$ such that
 $B(x_0,\tau) \subseteq \Omega$, where $B(x_0,\tau)$ denotes the open ball with center $x_0$ and radius $\tau$. Now, fix $\delta>0$ and consider the function $u_\delta\in X$ defined by
\[
u_\delta(x):= \left\{
\begin{array}{ll}
0 & \mbox{ if $x \in \bar\Omega \setminus B(x^0,\tau)$} \\\\
\displaystyle 16\frac{l^2}{\tau^4}\left(\tau- l \right)^2\delta
& \mbox{ if $x \in B(x^0,\tau) \setminus B(x^0,\tau/2)$} \\\\
\delta & \mbox{ if $x \in B(x^0,\tau/2),$}
\end{array}
\right.
\]
\noindent where $l:=\sqrt{\sum_{i=1}^{N}(x_i-x^0_i)^2}$.\par
 \noindent At this point, let
$$
F(x,\xi):=\int_0^{\xi} f(x,t)dt,\quad\forall\; (x,\xi)\in\bar\Omega\times\R,
$$
and put
$$
R_{F}(\tau,\delta):=\int_{B(x^0,\tau) \setminus B(x^0,\tau/2)}F(x,u_\delta(x))\;dx.
$$
 \noindent Moreover, set\par
$$
\sigma_{p,N}({\tau}):=\int_{\tau/2}^{\tau}|2(N+2)s^2-3(N+1)\tau s+N\tau^2|^ps^{N-1}ds.
$$
\noindent Finally, let us denote
$$
K_{p,N}(\tau):=\frac{\displaystyle\tau^{4p}\Gamma(N/2)}{2^{5p+1}\pi^{N/2}k^p\sigma_{p,N}(\tau)},
$$
and, for $\gamma>0$, define
$$
\eta(\gamma,\delta):=\frac{\tau^{4p}\Gamma(N/2)\gamma^p}{\tau^{4p}\Gamma(N/2)\gamma^p+k^p2^{5p+1}\pi^{N/2}\delta^{p}\sigma_{p,N}(\tau)}.
$$
\noindent With the above notations, the main result reads as follows.
\begin{theorem}\label{Main}
Let $f\in C^0(\bar\Omega\times\R)$ and put
$$
F(x,\xi):=\int_0^{\xi} f(x,t)dt,\quad\forall\; (x,\xi)\in\bar\Omega\times\R.
$$ Assume that there exist two positive constants $\gamma$ and $\delta$ such that
\begin{itemize}
\item [$(\textrm{h}_1)$] $\delta>K_{p,N}(\tau)^{1/p}\gamma$ $;$
\item [$(\textrm{h}_2)$] The following inequality holds
$$\displaystyle\int_{\Omega}\max_{|\xi|\leq \gamma}F(x,\xi)\;dx<\eta(\gamma,\delta)\left(R_{F}(\tau,\delta)
+\int_{B(x^0,\tau/2)}F(x,\delta)\;dx\right).$$
\end{itemize}
Further, require that
\begin{itemize}
\item [$(\textrm{h}_3)$]
There exist a function $\alpha\in L^1(\Omega)$ and a positive constant $s$ with $s<p$ such that
$$
F(x,\xi)\leq \alpha(x)(1+|\xi|^s),
$$
\noindent for almost every $x\in\Omega$ and for every $\xi\in\R$.
\end{itemize}
Then, for each
$$\lambda \in \Lambda_{1}:=\left]\lambda_1,\lambda_2\right[,$$

\noindent where
$$
\lambda_1:=\frac{2^{5p+1}\pi^{N/2}\sigma_{p,N}({\tau})\delta^p}{\tau^{4p}\Gamma(N/2)p\left(R_{F}(\tau,\delta)
+\displaystyle\int_{B(x^0,\tau/2)}F(x,\delta)\;dx-\displaystyle\int_{\Omega}\max_{|\xi|\leq \gamma}F(x,\xi)\;dx\right)},
$$
and
$$
\lambda_2:=\frac{\gamma^p}{\displaystyle pk^p\int_{\Omega}\max_{|\xi|\leq \gamma}F(x,\xi)\;dx},
$$
problem \eqref{N}
has at least three distinct solutions in $X$ and, moreover, for each $h>1$, there exists an open interval
$$
\Lambda_2\subset\left[0,\lambda_{3,h}\right],
$$
where
$$
\lambda_{3,h}:=\frac{h\gamma^p/(pk^p)}{\displaystyle\frac{\gamma^p\left(R_{F}(\tau,\delta)
+\displaystyle\int_{B(x^0,\tau/2)}F(x,\delta)\;dx\right)\tau^{4p}\Gamma(N/2)}{\displaystyle 2^{5p+1}k^p\pi^{N/2}\sigma_{p,N}(\tau)\delta^p}-\int_{\Omega}\max_{|\xi|\leq \gamma}F(x,\xi)\;dx},
$$
and a positive real number $\sigma>0$ such that, for each $\lambda\in \Lambda_2$, problem \eqref{N}
has at
least three solutions in $X$ whose norms are less than $\sigma$.
\end{theorem}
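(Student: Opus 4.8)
The plan is to recognize \eqref{N} as the Euler--Lagrange problem for a functional of the type handled by Theorem \ref{abstract}, and then to choose the data $(r,u_0,\bar u)$ so that hypotheses $(\textrm{h}_1)$--$(\textrm{h}_3)$ translate into conditions (i)--(iii). Concretely, I would set $\Phi(u):=\frac1p\|u\|^p=\frac1p\int_\Omega|\Delta u|^p\,dx$ and $\Psi(u):=\int_\Omega F(x,u(x))\,dx$, so that $u\in X$ is a weak solution of \eqref{N} if and only if $\langle\Phi'(u)-\lambda\Psi'(u),v\rangle=\int_\Omega|\Delta u|^{p-2}\Delta u\,\Delta v\,dx-\lambda\int_\Omega f(x,u)v\,dx=0$ for all $v\in X$, i.e. $\Phi'(u)-\lambda\Psi'(u)=0$. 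The natural selections are $u_0:=0$, for which $\Phi(u_0)=\Psi(u_0)=0$, the test function $\bar u:=u_\delta$, and the threshold $r:=\gamma^p/(pk^p)$.

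First I would verify the regularity hypotheses of Theorem \ref{abstract}. Since the norm \eqref{norm} realizes $X$ as a closed subspace of $L^p(\Omega)$, the space is uniformly convex; hence $\Phi$ is nonnegative, of class $C^1$, sequentially weakly lower semicontinuous, and its derivative is strictly monotone, coercive and of type $(S_+)$, so by the Minty--Browder theorem $\Phi'$ is a homeomorphism of $X$ onto $X^*$ and in particular admits a continuous inverse. The compact embedding $X\hookrightarrow\hookrightarrow C^0(\bar\Omega)$ recorded in \eqref{immersion} ensures that $\Psi$ is $C^1$ with compact derivative: if $u_n\rightharpoonup u$ in $X$ then $u_n\to u$ uniformly on $\bar\Omega$, whence $f(\cdot,u_n)\to f(\cdot,u)$ uniformly and $\Psi'(u_n)\to\Psi'(u)$ in $X^*$.

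Next I would establish (i) together with the sublevel estimate. Assumption $(\textrm{h}_3)$ and the bound $\|u\|_{C^0(\bar\Omega)}\le k\|u\|$ give $\Phi(u)-\lambda\Psi(u)\ge\frac1p\|u\|^p-\lambda\|\alpha\|_{L^1(\Omega)}(1+k^s\|u\|^s)$, and since $s<p$ the right-hand side tends to $+\infty$ as $\|u\|\to\infty$ for every $\lambda\ge0$, yielding (i). The same embedding controls the sublevel set: $\Phi(u)<r$ forces $\|u\|<\gamma/k$, hence $|u(x)|\le\gamma$ on $\bar\Omega$ and $\Psi(u)\le\int_\Omega\max_{|\xi|\le\gamma}F(x,\xi)\,dx$; as the closed ball $\{\|u\|\le\gamma/k\}$ is weakly closed, this bound survives on the weak closure, so $\sup_{u\in\overline{\Phi^{-1}(]-\infty,r[)}^{w}}\Psi(u)\le\int_\Omega\max_{|\xi|\le\gamma}F(x,\xi)\,dx$.

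The computational heart of the proof---and the step I expect to be the main obstacle---is the explicit evaluation of $\Phi(u_\delta)$ and the ensuing bookkeeping. On $B(x^0,\tau/2)$ the function $u_\delta$ is constant and outside $B(x^0,\tau)$ it vanishes, so only the annulus contributes; computing the radial Laplacian of $16\frac{l^2}{\tau^4}(\tau-l)^2\delta$ produces the quadratic $2(N+2)l^2-3(N+1)\tau l+N\tau^2$, and integrating its $p$-th power against $s^{N-1}\,ds$ with the spherical factor $2\pi^{N/2}/\Gamma(N/2)$ gives $\Phi(u_\delta)=\frac{2^{5p+1}\pi^{N/2}\sigma_{p,N}(\tau)\delta^p}{p\,\tau^{4p}\Gamma(N/2)}$, while $\Psi(u_\delta)=R_F(\tau,\delta)+\int_{B(x^0,\tau/2)}F(x,\delta)\,dx$ reads off directly from the definition of $u_\delta$. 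With these values (ii) is exactly $(\textrm{h}_1)$, and, since $\eta(\gamma,\delta)=r/(r+\Phi(u_\delta))$ and the weak-closure supremum of $\Psi$ is bounded by $\int_\Omega\max_{|\xi|\le\gamma}F(x,\xi)\,dx$, assumption $(\textrm{h}_2)$ implies (iii) and is in fact equivalent to the nonemptiness $\lambda_1<\lambda_2$ of the stated interval. Theorem \ref{abstract} then yields three solutions on its interval $\Lambda_1$; replacing the weak-closure supremum by its upper bound $\int_\Omega\max_{|\xi|\le\gamma}F(x,\xi)\,dx$ enlarges the left endpoint to $\lambda_1$ and shrinks the right one to $\lambda_2$, so $]\lambda_1,\lambda_2[\subseteq\Lambda_1$ and the three-solutions conclusion holds there. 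Feeding the same substitutions into the $\Lambda_2$ part of Theorem \ref{abstract} turns its upper bound into $\lambda_{3,h}$, which delivers the uniformly bounded solutions and completes the argument.
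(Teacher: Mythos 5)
Your proposal is correct and follows essentially the same route as the paper: the same choices $\Phi(u)=\|u\|^p/p$, $\Psi(u)=\int_\Omega F(x,u)\,dx$, $r=\gamma^p/(pk^p)$, $\bar u=u_\delta$, the same coercivity argument from $(\textrm{h}_3)$, the same explicit computation of $\Phi(u_\delta)$ and sublevel estimate, and the same interval comparisons before invoking Theorem \ref{abstract}. The only difference is that you spell out the verification of the regularity hypotheses (uniform convexity, Minty--Browder, compactness of $\Psi'$) which the paper leaves as ``easy to verify.''
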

\begin{proof}
For each $u\in X$, let $\Phi,\Psi:X\to\R$ defined by setting
$$
\Phi(u):=\frac{\|u\|^p}{p},\quad\quad \Psi(u):=\int_\Omega F(x,u(x))dx.
$$
 It is easy to verify that $\Phi:X\to\R$ is a nonnegative, continuously G\^{a}teaux differentiable and sequentially weakly lower semicontinuous functional whose G\^{a}teaux derivative admits a continuous inverse on $X^*$. Meanwhile, $\Psi$ is continuously G\^{a}teaux differentiable
with compact derivative and, moreover, $\Phi (u_0)=\Psi (u_0)=0,$ where $u_0$ is the identically zero function in $X$. In particular, one has
$$
\Phi'(u)(v)=\int_\Omega |\Delta u(x)|^{p-2}\Delta u(x)\Delta v(x)dx,
$$
and
$$
\Psi'(u)(v)=\int_\Omega f(x,u(x))v(x)dx,
$$
for every $u,v\in X$.\\
\indent Now, fixing $\lambda>0$, if we recall that a weak solution of problem (\ref{N}) is a function $u\in X$ such that
\[
\int_\Omega |\Delta u(x)|^{p-2}\Delta u(x)\Delta v(x)dx=\lambda\int_\Omega f(x,u(x))v(x)dx,
\]
for every $v\in X$,
it is obvious that our goal is to find critical points of the energy
functional $J_\lambda:=\Phi-\lambda\Psi$.\par
\indent Thanks to hypothesis $(\textrm{h}_3)$ and bearing in mind (\ref{immersion}), one has
$$
\int_\Omega F(x,u(x))dx\leq \|\alpha\|_{L^1(\Omega)}(1+k^s\|u\|^{s}).
$$
\noindent Hence
$$
J_\lambda(u)\geq \frac{\|u\|^p}{p}-\lambda \|\alpha\|_{L^1(\Omega)}(1+k^s\|u\|^{s}).
$$
\noindent Therefore, due to $s<p$, the following relation holds
$$\lim_{\|u\|\rightarrow \infty}J_\lambda(u)=+\infty,$$
for every $\lambda>0$.\par
\noindent Since $J_\lambda$ is coercive for every positive parameter $\lambda$, condition $(\textrm{i})$ is verified. Next, consider the function $u_\delta\in X$. Since
$$
\sum^{N}_{i=1}\frac{\partial^2 u_\delta(x)}{\partial x_i^2}=32d\left(\frac{2(N+2)l^2-3\tau(N+1)l+N\tau^2}{\tau^4}\right),
$$
for every $x\in B(x^0,\tau)\setminus B(x^0,\tau/2)$ and
$$
\sum^{N}_{i=1}\frac{\partial^2 u_\delta(x)}{\partial x_i^2}=0,\,\,\,\,\forall x\in (\bar\Omega\setminus B(x^0,\tau))\cup B(x^0,\tau/2),
$$
one has
\begin{equation}\label{phi}
\Phi(u_\delta)=\frac{\|u_\delta\|^p}{p}=\frac{2^{5p+1}\pi^{N/2}\delta^p}{\tau^{4p}\Gamma(N/2)p}\sigma_{p,N}({\tau}).
\end{equation}
\noindent Put
$$
r:=\frac{\gamma^p}{pk^p}.
$$
\noindent Now, it follows from $\delta>K_{p,N}(\tau)^{1/p}\gamma$ that $\Phi(u_\delta)>r$. We explicitly observe that, in view of (\ref{immersion}), one has
\begin{equation}\label{sublevels}
{\Phi^{-1}(]-\infty,r])}\subseteq\{u\in C^0(\bar\Omega):\ \|u\|_{\infty}\leq \gamma\}.
\end{equation}

\noindent  Moreover, taking (\ref{sublevels}) into account, a direct computation ensures that
\begin{equation}\label{cong1}
\displaystyle\sup_{u\in \overline{\Phi^{-1}(]-\infty,r[)}^{w}}\Psi(u)=\displaystyle\sup_{u\in {\Phi^{-1}(]-\infty,r])}}\Psi(u)\leq \displaystyle\int_{\Omega}\max_{|\xi|\leq \gamma}F(x,\xi)\;dx.
\end{equation}
\noindent At this point, by definition of $u_\delta$, we can clearly write
\begin{equation}\label{cong2}
\int_\Omega F(x,u_\delta(x))\;dx=R_{F}(\tau,\delta)
+\int_{B(x^0,\tau/2)}F(x,\delta)\;dx.
\end{equation}
\noindent By using hypothesis $(\textrm{h}_2)$, from \eqref{cong1} and \eqref{cong2}, we also have
$$
\displaystyle\sup_{u\in \overline{\Phi^{-1}(]-\infty,r[)}^{w}}\Psi(u)< \frac{r}{r+\Phi(u_\delta)}\Psi(u_\delta),
$$
taking into account that
$$
\frac{r}{r+\Phi(u_\delta)}=\frac{\tau^{4p}\Gamma(N/2)\gamma^p}{\tau^{4p}\Gamma(N/2)\gamma^p+k^p2^{5p+1}\pi^{N/2}\delta^{p}\sigma_{p,N}}=\eta(\gamma,\delta).
$$
So conditions $(\textrm{ii})$ and $(\textrm{iii})$ are verified by taking $\bar u:=u_\delta$. Thus, we can apply Theorem \ref{abstract} bearing in mind that
$$
\frac{\Phi(u_\delta)}{\Psi(u_\delta)-\displaystyle\sup_{u\in \overline{\Phi^{-1}(]-\infty,r[)}^{w}}\Psi(u)}\leq \lambda_1,
$$
and
$$
\frac{r}{\displaystyle\sup_{u\in \overline{\Phi^{-1}(]-\infty,r[)}^{w}}\Psi(u)}\geq \lambda_2.
$$
as well as
$$
\frac{hr}{\displaystyle r\frac{\Psi(u_\delta)}{\Phi(u_\delta)}-\displaystyle\sup_{u\in \overline{\Phi^{-1}(]-\infty,r[)}^{w}}\Psi(u)}\leq \lambda_{3,h}.
$$
 The proof is complete.
\end{proof}

\begin{remark}\label{semplice2}\rm{Assuming that
\begin{itemize}
\item [$(\textrm{j}_1)$] $\displaystyle F(x,\xi)\geq 0$ \textit{for every} $(x,\xi)\in \left(B(x^0,\tau) \setminus B(x^0,\tau/2)\right)\times [0,\delta]$$;$
\item [$(\textrm{j}_2)$] \textit{For every} $|\xi|\leq \gamma$ \textit{one has}
$$\displaystyle\int_{\Omega}\max_{|\xi|\leq \gamma}F(x,\xi)\;dx<\eta(\gamma,\delta)\int_{B(x^0,\tau/2)}F(x,\delta)\;dx,$$
\end{itemize}
it follows that hypothesis $(\textrm{h}_1)$ in Theorem \ref{Main} automatically hold.}
\end{remark}

\begin{remark}\label{general}
\rm{We point out that hypothesis $(\textrm{h}_2)$ in Theorem \ref{Main} can be stated in a more general form. Precisely, fix $x^0\in \Omega$ and pick $r_1,r_2\in\R$ with $r_2>r_1>0$, such that $B(x^0,r_1)\subset B(x_0,r_2)\subseteq \Omega$. Moreover, set\par
$$
\sigma_{p,N}(r_1,r_2):=\int_{r_1}^{r_2}|(N+2)s^2-(N+1)(r_1+r_2)s+Nr_1r_2|^ps^{N-1}ds,
$$
\noindent and denote
$$
K_{p,N}(r_1,r_2):=\frac{(r_2-r_1)^{3p}(r_1+r_2)^p\Gamma(N/2)}{2^{2p+1}3^p\pi^{N/2}k^p\sigma_{p,N}(r_1,r_2)}.
$$
At this point, let
$v_\delta$ the function be defined as follows,
\[
\displaystyle{
\small{
v_\delta(x):=\left\{
\begin{array}{ll}
0 & \mbox{ if $x \in \bar\Omega \setminus B(x^0,r_2)$} \\\\
\frac{\delta(3(l^4-r_2^4)-4(r_1+r_2)(l^3-r_2^3)+6r_1r_2(l^2-r_2^2))}{(r_2-r_1)^3(r_1+r_2)}
& \mbox{ if $x \in B(x^0,r_2) \setminus B(x^0,r_1)$} \\\\
\delta & \mbox{ if $x \in B(x^0,r_1),$}
\end{array}
\right.
}
}
\]
\noindent where $l:=\sqrt{\sum_{i=1}^{N}(x_i-x^0_i)^2}$.\par
 If $\delta$ and $\gamma$ in Theorem \ref{Main} satisfy $\delta>K(r_1,r_2)^{1/p}\gamma$, instead of $(\textrm{h}_1)$, hypothesis $(\textrm{h}_2)$ can be replaced by the following assumption, namely $(\textrm{h}_2^{\star})$:
\begin{equation*}
\displaystyle\int_{\Omega}\max_{|\xi|\leq \gamma}F(x,\xi)\;dx<\frac{r}{r+\Phi(v_\delta)}\left(R_{F}(r_1,r_2,\delta)
+\int_{B(x^0,r_1)}F(x,\delta)\;dx\right),
\end{equation*}
where
$$
r:=\frac{\gamma^p}{pk^p},
$$
$$
R_{F}(r_1,r_2,\delta):=\int_{B(x^0,r_2) \setminus B(x^0,r_1)}F(x,v_\delta(x))\;dx,
$$
and
$$
\Phi(v_\delta)=\frac{\delta^p}{pk^pK_{p,N}(r_1,r_2)}.
$$
Then for each
$$\lambda \in \Lambda_{1}^{\star}:=\left]\frac{\Phi(v_\delta)}{\Psi(v_\delta)-\displaystyle\int_{\Omega}\max_{|\xi|\leq \gamma}F(x,\xi)\;dx},\frac{\gamma^p}{\displaystyle pk^p\int_{\Omega}\max_{|\xi|\leq \gamma}F(x,\xi)\;dx}\right[,$$ the equation
\begin{equation}\label{equationx}
J_\lambda(u)=\Phi'(u)-\lambda\Psi'(u)=0,
\end{equation}
has at least three distinct solutions in $X$ and, moreover, for each $h>1$, there exists an open interval
$$
\Lambda_2^{\star}\subset\left[0,\frac{hr}{\displaystyle r\frac{\Psi(v_\delta)}{\Phi(v_\delta)}-\displaystyle\sup_{x\in \overline{\Phi^{-1}(]-\infty,r[)}^{w}}\Psi(x)}\right],
$$
and a positive real number $\sigma>0$ such that, for each $\lambda\in \Lambda_2$, the equation \eqref{equationx}
has at
least three solutions in $X$ whose norms are less than $\sigma$.
It is clear that if $r_1=\tau/2$ and $r_2=\tau$, condition $(\textrm{h}_2^{\star})$ coincides with $(\textrm{h}_2)$.\par
}
\end{remark}
Now, for completeness, we analyze the autonomous case
\begin{equation}\tag{$G_\lambda^{f}$} \label{4N}
\left\{
\begin{array}{ll}
\Delta(|\Delta u|^{p-2}\Delta u)=\lambda f(u)\quad & {\rm in\ }\Omega\\
u=\Delta u=0\quad & {\rm on\ }\partial\Omega,
\end{array}
\right.
\end{equation}
where $f:\erre\rightarrow \erre$ is a continuous function. With the above notations, let us define
$$
G_{F}(\tau,\delta):=\int_{B(x^0,\tau) \setminus B(x^0,\tau/2)}F(u_\delta(x))\;dx.
$$
\noindent Finally, the symbol $``\meas(B(x^0,\tau/2))"$ denotes the Lebesgue measure of the ball $B(x^0,\tau/2)$.
\begin{theorem}\label{Main2}
Let $f\in C^0(\R)$ and put
$$
F(\xi):=\int_0^{\xi} f(t)dt,\quad\forall\; \xi\;\R.
$$ Assume that there exist two positive constants $\gamma$ and $\delta$ such that condition $(\rm{h}_1)$ hold in addition to
\begin{itemize}
\item [$(\textrm{h}_2')$]
$\displaystyle F(\xi)<\frac{\eta(\gamma,\delta)}{\meas(\Omega)}\left(G_{F}(\tau,\delta)
+\meas(B(x^0,\tau/2))F(\delta)\right),$
for every $|\xi|\leq\gamma$.
\end{itemize}
Moreover, require that
\begin{itemize}
\item [$(\textrm{h}_3')$]
There exist two positive constants $b$ and $s$ with $s<p$ such that
$$
F(\xi)\leq b(1+|\xi|^s).
$$
\end{itemize}
Then, for each
$$\lambda \in \Lambda_{1}':=\left]\lambda_1',\lambda_2'\right[,$$

\noindent where
$$
\lambda_1':=\frac{2^{5p+1}\pi^{N/2}\sigma_{p,N}({\tau})\delta^p/\meas(\Omega)}{\tau^{4p}\Gamma(N/2)p\left(\displaystyle\frac{G_{F}(\tau,\delta)
+\displaystyle\meas(B(x^0,\tau/2))F(\delta)}{\meas(\Omega)}-\displaystyle\max_{|\xi|\leq \gamma}F(\xi)\right)},
$$
and
$$
\lambda_2':=\frac{\gamma^p}{\displaystyle pk^p\meas({\Omega})\max_{|\xi|\leq \gamma}F(\xi)},
$$
problem \eqref{4N}
has at least three distinct solutions in $X$ and, moreover, for each $h>1$, there exists an open interval
$$
\Lambda_2'\subset\left[0,\lambda_{3,h}'\right],
$$
where
$$
\lambda_{3,h}':=\frac{h\gamma^p/(p\meas({\Omega})k^p)}{\displaystyle\frac{\gamma^p\left(G_{F}(\tau,\delta)
+\displaystyle\meas(B(x^0,\tau/2))F(\delta)\right)\tau^{4p}\Gamma(N/2)}{\displaystyle 2^{5p+1}k^p\pi^{N/2}\sigma_{p,N}(\tau)\meas({\Omega})\delta^p}-\max_{|\xi|\leq \gamma}F(\xi)},
$$
and a positive real number $\sigma>0$ such that, for each $\lambda\in \Lambda_2'$, problem \eqref{4N}
has at
least three solutions in $X$ whose norms are less than $\sigma$.
\end{theorem}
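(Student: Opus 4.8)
The plan is to recognize that Theorem \ref{Main2} is nothing more than the autonomous specialization of Theorem \ref{Main}, obtained by taking $f(x,u)\equiv f(u)$, and therefore to deduce it directly from the general result rather than re-run the variational machinery. First I would observe that when $f$ does not depend on $x$, neither does its primitive, so $F(x,\xi)=F(\xi)$ for every $(x,\xi)\in\bar\Omega\times\R$. Consequently the three quantities governing Theorem \ref{Main} reduce to elementary multiples of the Lebesgue measure:
$$\int_\Omega\max_{|\xi|\leq\gamma}F(x,\xi)\,dx=\meas(\Omega)\max_{|\xi|\leq\gamma}F(\xi),\qquad R_F(\tau,\delta)=G_F(\tau,\delta),\qquad\int_{B(x^0,\tau/2)}F(x,\delta)\,dx=\meas(B(x^0,\tau/2))F(\delta).$$

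Second, I would verify that the simplified hypotheses $(\mathrm{h}_2')$ and $(\mathrm{h}_3')$ imply the corresponding hypotheses $(\mathrm{h}_2)$ and $(\mathrm{h}_3)$ of Theorem \ref{Main}. For $(\mathrm{h}_3')$ it suffices to set $\alpha(x):=b$, a constant function belonging to $L^1(\Omega)$ because $\Omega$ is bounded; then $F(\xi)\leq b(1+|\xi|^s)=\alpha(x)(1+|\xi|^s)$, which is exactly $(\mathrm{h}_3)$. For $(\mathrm{h}_2')$, since $F$ is continuous on the compact interval $[-\gamma,\gamma]$ the maximum $\max_{|\xi|\leq\gamma}F(\xi)$ is attained at some $\xi^\star$; applying $(\mathrm{h}_2')$ at $\xi=\xi^\star$ and multiplying through by $\meas(\Omega)$ gives precisely the strict inequality of $(\mathrm{h}_2)$ rewritten via the substitutions above. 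Hypothesis $(\mathrm{h}_1)$ is literally identical in the two statements, so all assumptions of Theorem \ref{Main} are secured.

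Finally, I would check that the parameter intervals $\Lambda_1'$ and $\Lambda_2'$ are exactly the images of $\Lambda_1$ and $\Lambda_2$ under these substitutions. Replacing $R_F$, $\int_{B(x^0,\tau/2)}F\,dx$ and $\int_\Omega\max F\,dx$ by their autonomous expressions in the formulas for $\lambda_1$, $\lambda_2$ and $\lambda_{3,h}$, and then cancelling the common factor $\meas(\Omega)$ that occurs in both numerator and denominator, one recovers precisely $\lambda_1'$, $\lambda_2'$ and $\lambda_{3,h}'$; in particular $\lambda_{3,h}=\lambda_{3,h}'$, so the outer bound $[0,\lambda_{3,h}']$ for the norm-bounded interval is the same. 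Invoking Theorem \ref{Main} then yields three distinct weak solutions for each $\lambda\in\Lambda_1'$ and the uniformly norm-bounded multiplicity for $\lambda\in\Lambda_2'$, which is the desired conclusion. The only delicate point is purely bookkeeping: one must track where the factor $\meas(\Omega)$ migrates between numerator and denominator so that the displayed expressions for $\lambda_1'$, $\lambda_2'$ and $\lambda_{3,h}'$ match those coming from Theorem \ref{Main}, and no genuinely new analytic obstacle arises.
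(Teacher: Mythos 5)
Your proposal is correct and is essentially the paper's own route: the paper states Theorem \ref{Main2} without a separate proof, treating it exactly as the autonomous specialization of Theorem \ref{Main} with $F(x,\xi)=F(\xi)$, $\alpha\equiv b$, and the three integrals collapsing to $\meas(\Omega)\max_{|\xi|\leq\gamma}F(\xi)$, $G_F(\tau,\delta)$ and $\meas(B(x^0,\tau/2))F(\delta)$. Your bookkeeping of the $\meas(\Omega)$ factors in $\lambda_1'$, $\lambda_2'$ and $\lambda_{3,h}'$ is accurate, so nothing is missing.
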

\begin{remark}\label{semplice}\rm{The following two conditions
\begin{itemize}
\item [$(\textrm{j}_1')$] $\displaystyle G_{F}(\tau,\delta)\geq 0$$;$
\item [$(\textrm{j}_2')$] \textit{For every} $|\xi|\leq \gamma$ \textit{one has}
$$\displaystyle F(\xi)<\eta(\gamma,\delta)\frac{\meas(B(x^0,\tau/2))}{\meas(\Omega)}F(\delta),$$
\end{itemize}
imply hypotheses $(\textrm{h}_1')$ in Theorem \ref{Main2}.\par
 \noindent Furthermore, assumption $(\textrm{j}_1')$ is verified by requiring that $F(\xi)\geq 0$ for every $\xi\in[0,\delta]$. Moreover, if $f$ is nonnegative, hypothesis $(\textrm{j}_1')$ automatically holds and $(\textrm{j}_2')$ attains a more simply form
$$\displaystyle F(\gamma)<\eta(\gamma,\delta)\frac{\meas(B(x^0,\tau/2))}{\meas(\Omega)}F(\delta).$$
Hence, Theorem \ref{intro} of Introduction is a direct consequence of the above observations. Indeed, let $f$ be a nonnegative continuous function such that $f(t)=0$ for every $t\in [-\gamma,\gamma]$. Bearing in mind that $f$ is not identically zero, there exists $\delta>\gamma\max\left\{1,K_{p,N}(\tau)^{1/p}\right\},$ such that
$$\displaystyle 0=F(\gamma)<\eta(\gamma,\delta)\frac{\meas(B(x^0,\tau/2))}{\meas(\Omega)}F(\delta).$$
Finally, we observe that if
\begin{itemize}
\item [$(\textrm{h}_3^{\star})$]$
\displaystyle\lim_{|t|\rightarrow \infty
}\frac{f(t)}{|t|^{s-1}}=0,$
\end{itemize}
\noindent for some $1\leq s\leq p$, the functional $J_\lambda$ is coercive. We give just some computations in the case $s=p$; analogous conclusion holds for $s\in [1,p[$. So, fix $\lambda>0$ and pick $\varepsilon<1/(\lambda k^p\meas(\Omega))$. Now, by our assumption at infinity, there exists $c(\varepsilon)>0$ such that
$$
|f(t)|\leq \varepsilon |t|^{p-1}+c(\varepsilon),\,\,\,\,\forall\; t\in\R.
$$
Then the previous inequality gives
$$
\displaystyle F(\xi)\leq \frac{\varepsilon}{p}|\xi|^p+c(\varepsilon)|\xi|,\,\,\,\,\forall\; \xi\in\R,
$$
\noindent and, consequently, taking into account (\ref{immersion}), one has
$$
\Psi(u)\leq \left(\frac{\varepsilon k^p}{p}\|u\|^p+c(\varepsilon)k\|u\|\right)\meas(\Omega),\,\,\,\,\,\,\forall\; u\in X.
$$
\noindent Since, for every $u\in X$, the following inequality holds
$$
J_\lambda(u)\geq \left(\frac{1}{p}-\lambda\frac{\varepsilon k^p}{p}\meas(\Omega)\right)\|u\|^p-\lambda c(\varepsilon)k\|u\|\meas(\Omega),
$$
the functional $J_\lambda$ is coercive.\par
Thus, all the assumptions (with $(\textrm{h}_3^{\star})$ instead of $(\textrm{h}_3')$) of Theorem \ref{Main2} are verified and the conclusion follows. For completeness we also note that Theorem \ref{intro} of Introduction is still true without sign assumption on $f$ on the half-line $]-\infty,\gamma[$.}
\end{remark}

At the end we exhibit a concrete application of our results.

\begin{example}\label{esempio}
{\rm
Let $\Omega$ be a nonempty bounded open subset of the Euclidean space $\R^{3}$ with a smooth boundary $\partial\Omega$ and define $f:\R\rightarrow \R$ as follows
\[
f(t):= \left\{
\begin{array}{ll}
\displaystyle 0 & \mbox{ if\, $t<2$}\\\\
\displaystyle \sqrt{t-2} & \mbox{ if $t\geq 2$},
\end{array}
\right.
\]
whose potential is given by
\[
F(\xi):= \left\{
\begin{array}{ll}
\displaystyle 0 & \mbox{ if\, $\xi<2$}\\\\
\displaystyle {\frac{2(\xi-2)^{3/2}}{3}} & \mbox{ if $\xi\geq 2$}.
\end{array}
\right.
\]
Consider
the following problem
 \begin{equation}\tag{$H_\lambda^{f}$} \label{N2}
\left\{
\begin{array}{ll}
\Delta^2 u=\lambda f(u)\quad & {\rm \ }\Omega\\
u=\Delta u=0\quad & {\rm \ }\partial\Omega.
\end{array}
\right.
\end{equation}

\noindent Arguing as in Remark \ref{semplice} we can observe that there exist two positive constants $\gamma=2$ and
 $$
 \delta>2\left\{1,K_{2,3}(\tau)^{1/p}\right\},
 $$
 such that, taking into account Remark \ref{semplice}, all the conditions of Theorem \ref{Main2} hold. Then, for each
$$\lambda \in \Lambda_{1}':=\left]\lambda_1^{\star},+\infty\right[,$$
\noindent where
$$
\lambda_1^{\star}:=\frac{2^{10}\pi^{3/2}\sigma_{2,3}({\tau}){\delta}^2}{\tau^{8}\Gamma(3/2)\left(G_{F}(\tau,\delta)
+\displaystyle\meas(B(x^0,\tau/2))F(\delta)\right)},
$$
\noindent problem \eqref{N2}
has at least three distinct (two nontrivial) solutions in $W^{2,2}(\Omega)\cap W_0^{1,2}(\Omega)$ and, moreover, for each $h>1$, there exists an open interval
$$
\Lambda_2'\subset\left[0,\lambda_{3,h}^{\star}\right],
$$
where
$$
\lambda_{3,h}^{\star}:=\frac{2^{10}\pi^{3/2}\sigma_{2,3}(\tau)\delta^2h}{\displaystyle {\tau^{8}\Gamma(3/2)\left(G_{F}(\tau,\delta)
+\displaystyle\meas(B(x^0,\tau/2))F(\delta)\right)}}=h\lambda_1^{\star},
$$
and a positive real number $\sigma>0$ such that, for each $\lambda\in \Lambda_2'$, problem \eqref{N2}
has at
least three (two nontrivial) solutions in $W^{2,2}(\Omega)\cap W_0^{1,2}(\Omega)$ whose norms are less than $\sigma$.
}
 \end{example}

 \medskip
 \indent {\bf Acknowledgements.}  This paper was written when the first author was a visiting professor at the University of Ljubljana in 2012. He expresses his gratitude  for the warm hospitality.
 The research was supported in part by the SRA grants P1-0292-0101 and J1-4144-0101.

\end{document}